
\documentclass[11pt,a4paper,reqno]{amsart}
\linespread{1.5} 

\usepackage{amsmath,amssymb,amsfonts,epsfig,mathrsfs,cite}
\usepackage[T1]{fontenc}
\usepackage{color}
\usepackage{array}
\usepackage{amsthm}
\usepackage{amstext}
\usepackage{graphicx}
\usepackage{setspace}

\usepackage{hyperref}

\numberwithin{equation}{section}

\usepackage{amsfonts}
\usepackage{amsmath}
\usepackage{amssymb}
\usepackage{shuffle}
\usepackage{graphicx}
\usepackage[official]{eurosym}
\usepackage{cancel}
\usepackage{url}
\ProvidesFile{Ushuffle.fd}%
 \DeclareFontFamily{U}{shuffle}{}
 \DeclareFontShape{U}{shuffle}{m}{n}{%
 <5-8>shuffle7%
 <8->shuffle10%
 }{}

 \NeedsTeXFormat{LaTeX2e}
 \ProvidesPackage{shuffle}[2008/10/27 Shuffle product symbol]
 \DeclareSymbolFont{Shuffle}{U}{shuffle}{m}{n}

\DeclareMathSymbol\shuffle{\mathbin}{Shuffle}{"001}
\DeclareMathSymbol\cshuffle{\mathbin}{Shuffle}{"002}

\usepackage[title]{appendix}

\makeatletter
\@namedef{subjclassname@2020}{%
  \textup{2020} Mathematics Subject Classification}
\makeatother

\setcounter{MaxMatrixCols}{10}

\newtheorem{theorem}{Theorem}[section]

\usepackage{amscd,psfrag}
\usepackage{yhmath}
\usepackage[mathscr]{eucal}

\newtheorem{lemma}{Lemma}[section]

\newtheorem{remark}{Remark}[section]

\usepackage{comment}

\newcommand{\R}{\mathbb{R}}

\newcommand{\na}{\nabla}

\newcommand{\p}{\partial}

\newcommand{\e}{\varepsilon}

\newcommand{\dd}{{\rm d}}

\newcommand{\G}{\Gamma}

\newcommand{\two}{{\rm II}}

\newcommand{\1}{{\mathbf{1}}}

\newcommand{\RR}{{\mathcal{R}}}

\def\XXint#1#2#3{{\setbox0=\hbox{$#1{#2#3}{\int}$ }
\vcenter{\hbox{$#2#3$ }}\kern-.6\wd0}}



\newcommand{\pbad}{\wp}
\newcommand{\QQ}{{\mathcal{Q}}}
\newcommand{\ppsi}{{\psi_{x_1, x_2}}}
\newcommand{\ed}{{\eta^{\delta}}}
\newcommand{\pd}{{\varphi^{\delta}}}
\newcommand{\dist}{{\rm dist}}
\newcommand{\po}{{\p\Omega}}
\newcommand{\nor}{{\widetilde{\nu}}}
\newcommand{\dduu}{{{\rm div}\,{\rm div}\,(u\otimes u)}}

\newcommand{\kn}{{\mathcal{K}_N}}

\newcommand{\group}{{\left(\ed\ppsi\right)}}

\newcommand{\td}{{\widetilde{\bf \delta}}}

\newcommand{\tilo}{{\widetilde{\Omega}}}

\usepackage{xcolor}

\setcounter{section}{0}

\usepackage{comment}

\allowdisplaybreaks[4]

\usepackage{slashed}


\title{A note for double Hölder regularity of the hydrodynamic pressure for weak solutions of Euler equations}

\author{Siran Li: School of Mathematical Sciences and CMA-Shanghai, Shanghai Jiao Tong University, Shanghai, China (200240)}

\address{Siran Li: School of Mathematical Sciences and CMA-Shanghai, Shanghai Jiao Tong University, Shanghai, China (200240)}

\email{\texttt{siran.li@sjtu.edu.cn}}

\author{Ya-Guang Wang:  School of Mathematical Sciences, Center for Applied Mathematics, MOE-LSC, and SHL-MAC, Shanghai Jiao Tong University, Shanghai, China (200240)}

\address{Ya-Guang Wang:  School of Mathematical Sciences, Center for Applied Mathematics, MOE-LSC, and SHL-MAC, Shanghai Jiao Tong University, Shanghai, China (200240)}

\email{\texttt{ygwang@sjtu.edu.cn}}

\subjclass[2020]{(Primary) 35B65, 35Q31, 35Q35; (Secondary) 76B03, 76F40}
\keywords{Euler equations; hydrodynamic pressure; H\"{o}lder regularity; turbulence; boundary layer}
\date{\today}

\pagestyle{plain}
\begin{document}

\begin{abstract}
We give an elementary proof for the interior double H\"{o}lder regularity of the hydrodynamic pressure for weak solutions of the Euler Equations in a bounded $C^2$-domain $\Omega \subset \R^d$; $d\geq 3$. That is, for velocity $u \in C^{0,\gamma}(\Omega;\R^d)$ with some $0<\gamma<1/2$, we show that the pressure $p \in C^{0,2\gamma}_{\rm int}(\Omega)$. This is motivated by the studies of turbulence and anomalous dissipation in mathematical hydrodynamics and, recently, has been  established in [L. De Rosa, M. Latocca, and G. Stefani, \textit{Int. Math. Res. Not.} \textbf{2024.3} (2024), 2511--2560] over $C^{2,1}$-domains by means of pseudodifferential calculus. Our approach involves only standard elliptic PDE techniques, and relies  on a variant of the modified pressure introduced in [C.~W. Bardos, D.~W. Boutros, and E.~S. Titi, H\"{o}lder regularity of the pressure for weak solutions of the 3D Euler equations in bounded domains,  \textit{Arch. Rational Mech. Anal.} \textbf{249} (2025), 28] and the potential estimates in [L. Silvestre, \textit{unpublished notes}]. The key novel ingredient of our proof is the introduction of two cutoff functions whose localisation parameters are carefully chosen as a power of the distance to $\po$.
\end{abstract}

\maketitle

\section{Introduction}
\label{sec: intro}

We are concerned with the Euler equations for the motion of incompressible inviscid fluids in a bounded $C^2$-domain $\Omega\subset\R^d$ with $d \geq 3$:
\begin{equation}\label{euler}
\begin{cases}
\p_t u + {\rm div}( u\otimes u) + \na p = 0\quad\text{and}\quad {\rm div}\, u = 0\quad \text{in } [0,T]\times \Omega,\\
u\cdot\nu = 0 \qquad\text{on }[0,T]\times \po.
\end{cases}
\end{equation}
Here, $u:[0,T]\times \Omega\to\R^d$ is the velocity and $p:[0,T]\times \Omega\to\R$ is the hydrodynamic pressure of the flow, and $\nu:\po\to\R^d$ is the outward normal vector field to the boundary $\po$. On $\po$ the impermeability boundary condition is imposed. The goal of this note is to give an alternative, elementary proof for the interior double H\"{o}lder regularity of $p$, and to provide refined characterisations for the limiting behaviour when approaching the boundary.

\subsection{Main Theorem}
The main result of the paper is as follows:
\begin{theorem}\label{thm}
Let $(u,p)$ be a weak solution of the Euler equation~\eqref{euler} in the bounded $C^2$-domain $\Omega \subset \R^d$ for $d \geq 3$. Assume that $u$ is of H\"{o}lder regularity $C^{0,\gamma}(\Omega;\R^d)$ in the spatial variable for some $0<\gamma<1/2$. Then the hydrodynamic pressure $p$ is locally of double H\"{o}lder regularity $C^{0,2\gamma}_{\rm loc}(\Omega)$ in the spatial variable. 
Moreover, for any subdomain $\tilo\Subset\Omega$ and any $x_1, x_2 \in \tilo$,
\begin{align}\label{thm, est}
    |p(x_1)-p(x_2)|\leq C|x_1-x_2|^{2\gamma}
\end{align}
with the constant $
C=C\left(d,\gamma,\tilo\right)\cdot  \|u\|_{C^{0,\gamma}(\Omega)}^2$, where $C\left(d,\gamma,\tilo\right)$ depends only on $d$, $\gamma$, the $C^2$-geometry of $\Omega$, and the distance from $\tilo$ to $\p\Omega$.
\end{theorem}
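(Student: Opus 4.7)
Taking the divergence of the momentum equation in \eqref{euler} and using ${\rm div}\,u=0$ gives
\begin{equation*}
-\Delta p = \dduu\qquad\text{in }\Omega,
\end{equation*}
complemented by a Neumann-type condition on $\po$ obtained from the normal trace of the momentum equation. Rather than represent $p$ directly through the Neumann Green's function $\kn$ of $\Omega$---whose lack of translation invariance and delicate boundary behaviour make a Calder\'{o}n--Zygmund analysis unwieldy---the plan is to localise $p$ via two cutoffs: a \emph{boundary cutoff} $\ed$ that vanishes on a $\delta$-strip around $\po$ and equals $1$ on $\tilo$, and a \emph{pointwise cutoff} $\ppsi$ that equals $1$ in a small neighbourhood of $\{x_1,x_2\}$ and is supported on a scale comparable to a suitable power of $|x_1-x_2|$. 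The key structural choice is that both localisation parameters are taken to be specific \emph{powers of} $\dist(\cdot,\po)$, so that boundary effects and near-field oscillations of $u\otimes u$ balance.

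Following the \emph{modified pressure} strategy of Bardos--Boutros--Titi, introduce $\widetilde p$ on $\R^d$ so that
\begin{equation*}
-\Delta \widetilde p \;=\; {\rm div}\,{\rm div}\!\left[\group\,(u\otimes u)\right]\;+\;\bigl(\text{commutators involving }\na\group,\,\na^2\group\bigr),
\end{equation*}
where the commutator terms pair derivatives of $\group$ against $p$ and $u\otimes u$. Since $\group\equiv 1$ on a neighbourhood of $\tilo$, the difference $p-\widetilde p$ is harmonic there and hence smooth, reducing the problem to bounding $|\widetilde p(x_1)-\widetilde p(x_2)|$. The whole-space Newton potential representation then reads
\begin{equation*}
\widetilde p(x)\;=\;\int_{\R^d}\p_i\p_j G(x-y)\,\group(y)\,(u^iu^j)(y)\,\dd y\;+\;(\text{potentials of commutators}),
\end{equation*}
with $G$ the fundamental solution of $-\Delta$ on $\R^d$.

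Setting $x_0:=(x_1+x_2)/2$, one then writes
\begin{equation*}
\widetilde p(x_1)-\widetilde p(x_2)=\int_{\R^d}\bigl[\p_i\p_j G(x_1-y)-\p_i\p_j G(x_2-y)\bigr]\group(y)\bigl[(u^iu^j)(y)-(u^iu^j)(x_0)\bigr]\dd y
\end{equation*}
plus the analogous commutator integrals. In the near-zone $\{|y-x_0|\leq 2|x_1-x_2|\}$ the H\"{o}lder bound on $u$ produces an integrand of order $|y-x_0|^{2\gamma-d}$, whose integral is $\lesssim|x_1-x_2|^{2\gamma}$. In the far-zone the mean-value theorem gives $|\p^2 G(x_1-y)-\p^2 G(x_2-y)|\lesssim|x_1-x_2|\,|y-x_0|^{-d-1}$, and integration against the H\"{o}lder difference of $u\otimes u$ up to the cutoff scale also yields $|x_1-x_2|^{2\gamma}$, with constant proportional to $\|u\|_{C^{0,\gamma}(\Omega)}^2$. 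Silvestre's unpublished Riesz-potential estimates are invoked to bound each piece cleanly.

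The principal obstacle---and the source of the delicate scaling---is the commutator integrals. Every derivative of $\ed$ or $\ppsi$ costs a negative power of $\delta$ or of the pointwise scale, while the quantity $p$ itself appears in the commutators and is \emph{a priori} only in $\linf_{\loc}$, with control that may degenerate as $\tilo$ approaches $\po$. A careless choice of cutoff scales produces either a non-integrable contribution or a tail that blows up as $\dist(\tilo,\po)\to 0$. The remedy, which is the novel ingredient of the proof, is to tune the two localisation parameters as precise powers of $\dist(\tilo,\po)$ and of $|x_1-x_2|$, so that the negative powers created by the cutoff derivatives are exactly compensated by the $|x_1-x_2|^{2\gamma}$ gain from the potential estimates. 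Once this balance is in place, the remainder is a routine elliptic bookkeeping that assembles \eqref{thm, est}.
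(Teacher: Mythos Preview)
Your outline diverges from the paper's argument in a structural way, and as written it contains an internal inconsistency that would have to be repaired before the scheme could be carried out.

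In the paper, $\ppsi$ is \emph{not} a cutoff: it is the difference $\kn(\cdot-x_1)-\kn(\cdot-x_2)$ of Neumann Green functions on $\Omega$, used as a \emph{test function}. The modified pressure $\wp=p+\QQ+\RR$, with $\QQ=\pd(u\cdot\nor)^2$ and $\RR$ a second-fundamental-form correction, is built precisely so that $\p_\nu\wp=0$ on $\po$; the whole-space Newton potential never enters. The second cutoff $\ed$ is multiplied into the \emph{test function} $\ppsi$, not into $u\otimes u$, and its sole purpose is to kill the boundary term $\int_{\po}\ppsi\,\p_\nu\QQ\,\dd\Sigma$, which is ill-defined since $\p_\nu\QQ$ need not lie in $\mathcal{D}'(\po)$ (Remark~\ref{remark: key}). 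Pairing $\Delta\wp$ with $\ed\ppsi$ produces the identity $\wp(x_1)-\wp(x_2)=I_1+I_2$ in~\eqref{pbad key eq}, and Lemmas~\ref{lemma: I1} and~\ref{lemma: I2} bound the two pieces directly.

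Your alternative---cutting off $u\otimes u$ by $\group$ and representing a surrogate $\widetilde p$ through the whole-space fundamental solution---is a different route that could in principle be made to work, but the proposal is inconsistent as stated: you declare $\ppsi$ to be supported on a scale comparable to a power of $|x_1-x_2|$, yet two lines later require $\group=\ed\ppsi\equiv 1$ on a neighbourhood of $\tilo$ so that $p-\widetilde p$ is harmonic there. These two requirements are incompatible. If instead you only meant $\group\equiv 1$ near $\{x_1,x_2\}$, then you owe an estimate on the harmonic remainder over a ball whose radius shrinks with $|x_1-x_2|$, and that estimate is not free. Finally, the decisive quantitative choice---$\delta\approx|x_1-x_2|^{(d-2+2\gamma)/(d-2)}$, i.e.\ $\beta=\tfrac{d-2}{d-2+2\gamma}$ as in~\eqref{3.8}, which is exactly what makes the annular commutator term $I_1$ close to $C|x_1-x_2|^{2\gamma}$ uniformly in $\delta$---is absent from your outline, and without it the ``routine bookkeeping'' cannot be completed.
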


\begin{remark}
In the above theorem, more precisely, one may replace the ``$C^2$-geometry of $\Omega$'' with the following geometrical data: the $C^0$-norm of the second fundamental form of $\po$, the intrinsic diameter of $\Omega$ (\emph{i.e.}, the infimum of the length of $C^1$-curves connecting any two points inside $\Omega$), and the injectivity radius of $
\Omega$. If a constant $C_{\rm geom}$ depends on these data only, we shall say that $C_{\rm geom}$ is a geometrical constant.
\end{remark}

\begin{remark}\label{remark: constant}
The constant $C$ in Theorem~\ref{thm}, \eqref{thm, est} may be chosen as follows. Setting $\kappa := \min\Big\{{\rm dist}(x_1, \p\Omega), {\rm dist}(x_2, \p\Omega)\Big\}>0$, we have that
\begin{equation*}
    C \approx \begin{cases}
1, \qquad \text{if } |x_1-x_2|\lesssim \kappa^{1+\frac{2\gamma}{d-2}}\text{ or } |x_1-x_2|\gtrsim 1;\\
\kappa^{\frac{-2\gamma(d-2+2\gamma)}{d-2}}, \qquad \text{if } \kappa^{1+\frac{2\gamma}{d-2}}\lesssim |x_1-x_2| \lesssim 1.   \end{cases}
\end{equation*}
All symbols $\approx$, $\lesssim$ here are understood as modulo constants depending only on $d$, $\gamma$, and the $C^2$-geometry of $\Omega$. 
\end{remark}

Throughout this note, we suppress the time variable by writing $u(t,x)\equiv u(x)$, $p(t,x)\equiv p(x)$, and the like. Theorem~\ref{thm} should be understood in the kinematic sense; that is, the inequality~\eqref{thm, est} holds for every $t \in [0,T]$.

Theorem~\ref{thm} was first established by Silvestre on the whole space $\R^d$ in the unpublished note \cite{note}. Such regularity results play an important role in the mathematical studies of turbulence theory, especially those pertaining to the Onsager conjecture. See \cite{turb1, turb2, turb3, turb4} and many of the subsequent works. Notably, the double H\"{o}lder regularity of the hydrodynamic pressure on $\R^d$ or $\mathbf{T}^d$ has been crucially used by Isett to prove the smoothness of (possibly nonunique) trajectories of Euler flows for velocities of below Lipschitz regularity \cite{i}, and recently by De Rosa--Isett to establish intermittency results in fully developed turbulence \cite{di}.

Bardos and Titi initiated in \cite{bt0} the project of extending the Onsager conjecture and related results to  bounded domains $\Omega \subset\R^d$. The H\"{o}lder regularity of the hydrodynamic pressure  $p$ in the Euler equations~\eqref{euler} plays a fundamental role in the study of anomalous dissipation on bounded domains \cite{bt0, btw, rrs1, rrs2}. Assuming $u \in C^{0,\gamma}(\Omega)$ with $0<\gamma<1$ and $\Omega$ is a bounded $C^3$-domain, Bardos and Titi obtained that $p \in C^{0,\gamma}_{\rm int}(\Omega)$ for $\Omega \subset \R^2$ in \cite{bt} and, together with Boutros, proved the same result for $\Omega \subset \R^3$ in \cite{bbt}.  Then, utilising among others the techniques developed in \cite{cd, cdf}, De Rosa, Latocca and Stefani \cite{dls1} established the almost double H\"{o}lder regularity in the sense that $p \in C^{1,\min(\alpha, 2\gamma-1)}(\Omega)$ for $1/2<\gamma<1$ on $C^{2,\alpha}$-domains, and $p \in C^{2\gamma-\e}(\Omega)$ with arbitrarily small $\e>0$ for $0<\gamma<1/2$ on $C^{3,\alpha}$-domains. The same authors later in \cite{dls2} extended the double H\"{o}lder regularity $p \in C^{2\gamma}(\Omega)$ to the index range $0<\gamma<1/2$ on $C^{2,1}$-domains. In addition, for the critical index $\gamma=1/2$, it is proved in \cite[Proposition~2.5]{dls2} that $p \in C^{1}_\star(\Omega)$, the Calder\'{o}n--Zygmund space. This improves the log-Lipschitz regularity of $p$ obtained earlier in 
Constantin \cite{c}. The interior double H\"{o}lder regularity of $p$ on $C^4$-domains in $\R^3$ has also been proved in \cite{bbt}.

The approach in the recent work \cite{dls2} by De Rosa, Latocca and Stefani involves a delicate application of  pseudodifferential calculus and the Littlewood--Paley theory. Here, on a bounded $C^2$-domain $\Omega \subset \R^{d\geq 3}$, we prove the interior double H\"{o}lder regularity $p \in C^{2\gamma}_{\rm loc}(\Omega)$ for $0<\gamma<1/2$, utilising only elementary techniques of (Neumann) Green functions and integration by parts. Moreover, we give explicit bounds on the blowup rate for $\frac{|p(x)-p(y)|}{|x-y|^{2\gamma}}$ as $x,y$ approach the boundary $\po$. The issue of double H\"{o}lder regularity of $p$ up to the boundary might have remained to be clarified, as indicated in \cite[p.8, at the end of \S~1.3]{bbt}.

Our arguments make essential use of ideas from Silvestre \cite{note} and Bardos--Boutros--Titi \cite{bbt}. As in \cite{bbt} (with an extra correction term), we introduce a \emph{modified pressure} $\wp$ (see \S\ref{sec: P} for details), which equals $$p+(u\cdot\nu)^2 + \dist(y,\po)\two\left(u^\top,u^\top\right)(y_\star)$$ on $\po$ and equals $p$ outside the $(2\delta)$-collar of the boundary, with  $\delta>0$ being a small parameter to be specified. Here, $\two$ is the second fundamental form of $\po$, $u^\top$ is the horizontal components of $u$, and $y_\star$ is the unique point on $\po$ such that $|y-y_\star| = \dist(y,\po)$. The construction of $\wp$ makes use of a cutoff function $\pd$ localised in the $(2\delta)$-collar of $\po$.

Rather than working directly with $\wp$, we introduce a \emph{second cutoff function} $\ed$ (see \eqref{second cutoff}) which equals zero in the $\delta/2$-collar of $\po$ and equals one outside the $\delta$-collar. Thus, by working with the elliptic PDE for $\ed\wp$ --- which is localised to the interior of $\Omega$ --- we circumvent the issues caused by  highly irregular boundary values (see Remark~\ref{remark: key} and \cite[Section~8]{bbt}). The cost to pay is that the singular terms $\left|\na\ed\right|\approx \delta^{-1}$ and $\left|D^2\ed\right|\approx \delta^{-2}$ enter our estimates. We overcome this issue by carefully adjusting the ``boundary layers'' induced by cutoff functions. For reasons that will become transparent along the proof, when estimating $|\wp(x_1)-\wp(x_2)|$ for $x_1$, $x_2$ sufficiently close to each other and  $|x_1-x_2| \approx \dist\left( \frac{x_1+x_2}{2},\po\right)$, we choose
\begin{align*}
\delta \approx |x_1-x_2|^{\frac{d-2+2\gamma}{d-2}}
\end{align*}
modulo geometrical constants.

With the above preparations, an adaptation of the representation formula for $p$ in Silvestre \cite{note} (with the integral kernel therein $\ppsi$ replaced by $\ed\ppsi$, where $\ppsi$ is the difference between the Neumann Green functions on $\Omega$ with singularities at $x_1$ and $x_2$) allows us to obtain the double H\"{o}lder regularity Theorem~\ref{thm} via direct potential estimates. It is crucial to our arguments that the previous choice of cutoff functions $\pd,\ed$ ensures that the constant in the inequality~\eqref{thm, est} is  uniform in $\delta$ up to the boundary. See Remark~\ref{remark: key} for details.

\subsection{Notations}
We fix some notations used throughout this paper.

We write $\1_E$ for the indicator function of a set $E$, ${\bf B}_\e(x)$ for the Euclidean ball centred at $x$ of radius $\e$, and  $\delta_y$ for the Dirac delta measure supported at $y$. For each $r>0$ sufficiently small (\emph{i.e.}, less than the injectivity radius of $\Omega$, which shall be recalled below), denote
\begin{equation*}
    \Omega_r := \left\{x \in \Omega:\,\dist(x,\po)>r \right\}.
\end{equation*}
Its complement $\Omega \setminus \Omega_r$ is the ``\emph{$r$-collar}'' of $\po$ in $\Omega$.

A continuous function $u:\tilo\to\R^\ell$ is \emph{$\alpha$-H\"{o}lder} ($0<\alpha<1$) if the following seminorm is finite:
\begin{align*}
    [u]_{C^{0,\alpha}\left(\tilo\right)} := \sup_{x\neq y\,\text{in } \tilo} \left\{ \frac{|u(x)-u(y)|}{|x-y|^\alpha} \right\}.
\end{align*}
Its \emph{$\alpha$-H\"{o}lder norm} is
\begin{align*}
    \|u\|_{C^{0,\alpha}\left(\tilo\right)} :=  [u]_{C^{0,\alpha}\left(\tilo\right)}  +  \|u\|_{C^{0}\left(\tilo\right)}. 
\end{align*}
A function $u:\Omega \to \R^\ell$ is in $C^{0,\alpha}_{\rm loc}(\Omega)$ if its restriction on $\tilo$ is in $C^{0,\alpha}\left(\tilo\right)$ for any $\tilo \Subset \Omega$.

Since $\Omega$ is a bounded $C^2$-domain, its boundary $\po$ is a compact $C^2$-hypersurface with the outward unit normal vector field $\nu \in C^1(\po,\R^d)$. Then the second fundamental form of $\po$, given by $$\two = \na\nu: \G\big(T(\po)\big)\times \G\big(T(\po)\big) \longrightarrow \R,$$ is of $C^0$-regularity; here $
\na$ is the Euclidean gradient on $\R^d$. The $C^2$-geometry of $\Omega$ is determined by the $C^0$-norm of the tensor $\two$. 

Starting from any $x\in\po$, one may flow the point $x$ by the inward unit normal $-\nu(x)$ for some time $\tau(x)$. The supremum of those numbers $\tau>0$ such that if $\tau(x) \leq \tau$ for all $x\in\po$ then the image of $\po$ under the flow has no self-intersections is the \emph{injectivity radius} of $\Omega$, denoted as ${\rm injrad}(\Omega)$. For the bounded $C^2$-domain $\Omega$, one has ${\rm injrad}(\Omega) > 0$. The distance function $y \mapsto \dist(y, \po)$ lies in $C^2\left(\Omega \setminus \overline{\Omega_\delta}\right)$ for any $\delta \in \left]0,{\rm injrad}(\Omega)\right[$.

Einstein's summation convention is assumed throughout. That is, repeated upper and lower indices are always understood as being summed over. Also, for a constant $C>0$, we write $C=C(a_1, a_2, \ldots, a_n)$ to emphasise that $C$ depends only on the parameters $a_1, a_2, \ldots, a_n$. If $a_1 = \Omega$ it means that $C$ depends on the $C^2$-geometry of $\Omega$ as well as $a_2, \ldots, a_n$.

\subsection{Organisation} 
In \S\ref{sec: P} we define the modified pressure $\wp$, which involves the first cutoff function $\pd$ and is a variant of the one introduced in Bardos, Boutros, and Titi \cite{bbt}. 

The main result, Theorem~\ref{thm}, will be proved in \S\ref{sec: proof}. The novel tool for our proof, namely that the second cutoff function $\ed$, is elaborated in \S\ref{subsec: ed}. Estimates for $\wp$ occupy \S\S\ref{subsec: x} $\&$ \ref{subsec: y}. Finally, the proof of Theorem~\ref{thm} is concluded in \S\ref{subsec: z}.

Several concluding remarks of the paper are given in \S\ref{sec: concl}.

\section{The modified pressure}
\label{sec: P}
In this section, motivated by Bardos, Boutros and Titi \cite[(2.17)]{bbt},  we introduce the modified pressure $\wp$ from the problem \eqref{euler}.

Let $\phi^\delta:[0,
\infty[ \to [0,1]$ be a smooth non-increasing function such that
\begin{equation*}
\phi^\delta(s) \equiv 1 \text{ for $0\leq s \leq \delta$}, \quad \phi^\delta(s) \equiv 0 \text{ for $s \geq 2\delta$},\quad \text{ and } \left|\left(\phi^\delta\right)'\right|\leq \frac{2}{\delta},
\end{equation*}
for an arbitrary small $\delta>0$. To fix the idea,   assume throughout 
\begin{align*}
    0<\delta<10^{-3}.
\end{align*}
Then set 
\begin{equation}\label{first cutoff}
\pd(y) := \phi^\delta\big(\dist(y,\po)\big)\qquad \text{for } y \in \Omega.
\end{equation}
It is supported in the $(2\delta)$-collar of the boundary, namely that $\Omega \setminus \Omega_{2\delta}$, and is termed as the \emph{first cutoff function}. As $\Omega$ is a $C^2$-domain, we have $\pd \in C^2(\Omega)$.  

Let $\nor$ be an extension in $\Omega \setminus \Omega_{10\delta}$ of the outward unit normal vector field $\nu$  on $\po$, in the way that for any $y \in \Omega \setminus \Omega_{10\delta}$, there is a unique point --- \emph{i.e.}, the nearest point projection --- $y_\star \in \po$ such that $|y-y_\star|=\dist(y,\po)$, we set $\nor(y):=\nu(y_\star)$, by noting (for reasons that shall become clear from the later developments) that 
\begin{equation}\label{condition on injrad}
{\rm injrad}(\Omega) \geq 100 \cdot \delta^{\frac{d-2+2\gamma}{d-2}}.    
\end{equation}

As in Bardos, Boutros, and Titi  \cite[(2.17)]{bbt}, we set 
\begin{equation}\label{modified pressure}
    P \equiv P^\delta := p + \pd \left(u\cdot\nor\right)^2,
\end{equation}
where $p$ is the hydrodynamic pressure in the Euler equation~\eqref{euler}. For ease of notations, we suppress its dependence on the cutoff parameter $\delta$. This is our ``first modified pressure''. We also denote
\begin{equation}\label{new-Q, def}
    \QQ := \pd \left(u\cdot\nor\right)^2.
\end{equation}
Note that ${\bf supp}\,\QQ \subset \Omega \setminus \Omega_{2\delta}$ and $\QQ \equiv \left(u\cdot\nor\right)^2$ in $ \Omega \setminus \Omega_\delta$. 

\begin{remark}\label{remark: key}
It is crucial to notice that $\QQ$ does not have a well-defined normal derivative on the boundary, cf. \cite[Section~8]{bbt}. A divergence-free $C^{0,\gamma}$-vector field $u$ is constructed therein such that 
$$u\cdot\na\left( u\cdot\nu\right) \big|_{\po}\notin \mathcal{D}'(\po).$$
\end{remark}

Nevertheless, although $\p_\nu\QQ\big|_{\po}$ and $\p_\nu p\big|_{\po}$ fail to be well-defined by themselves in general, $\p_\nu P\big|_{\po}$ is a good quantity. This is because
\begin{align*}
    \p_\nu p = \na\nor: (u\otimes u) - \p_\nu\left(u\cdot\nor\right)^2 - \p_\tau[(u\cdot \tau)(u\cdot\nor)] - \p_t(u\cdot\nor),
\end{align*}
where $\tau$ denotes vector fields tangent to $\po$, and $u\cdot\nor\big|_{\po} = u\cdot\nu\big|_{\po} = 0$. See  \cite[pp.2--3]{bbt}. This indeed is the motivation for the introduction of $P$ in \cite{bbt}. 

With the notations above, we deduce from the Euler equations~\eqref{euler} the following Neumann boundary value problem for our first modified pressure:
\begin{equation}\label{key PDE for P}
    \begin{cases}
\Delta P = \Delta \QQ + \dduu \qquad \text{in } [0,T]\times \Omega,\\
\p_\nu P = \two\left(u^\top,u^\top\right)\qquad \text{on } [0,T]\times \po.
    \end{cases}
\end{equation}
Here $\two$ is the second fundamental form of the surface $\po \subset\R^3$ which maps a pair of tangential vector fields along $\po$ to a scalar. One has that $$\two\left(u^\top,u^\top\right) \equiv \na \nu : (u\otimes u),$$ where the superscript ${}^\top$ denotes the  projection of a vector field to the tangential plane of $\po$.

To proceed, let us further introduce a \emph{second modified pressure} $\wp$, which satisfies the homogeneous Neumann boundary condition and hence admits a representation formula via the Neumann Green function (see \eqref{ppsi def} and the ensuing lines). For this purpose, we set
\begin{equation}\label{new, correction R}
\RR(y) := \dist(y,\po) \pd(y)\left[\two\left(u^\top, u^\top\right)(y_\star) \right],
\end{equation}
where $y_\star$ is the nearest point projection of $y$ to $\po$ as before. It follows that
\begin{align*}
\p_{\nor}\RR(y) &= \left\{\p_{\nor}\dist(y,\po) \pd(y) + \dist(y,\po)\p_{\nor} \pd(y) \right\}
\left[\two\left(u^\top, u^\top\right)(y_\star) \right]\\
&=-\left[\two\left(u^\top, u^\top\right)(y_\star) \right]\qquad \text{for any } y \in \Omega \setminus \Omega_{\delta},
\end{align*}
thanks to $\pd\equiv 1$ and $\p_{\nor}\dist(y,\po)=-1$ in $\Omega \setminus \Omega_{\delta}$. Thus,  setting
\begin{equation}\label{wp, def}
    \wp := P+\RR \equiv p + \QQ + \RR
\end{equation}
for $\QQ$ and $\RR$ in \eqref{new-Q, def} and \eqref{new, correction R}, respectively, we obtain the \emph{homogeneous} Neumann boundary problem for $\wp$:
\begin{equation}\label{key PDE for wp}
    \begin{cases}
\Delta \wp = \Delta \QQ + \Delta\RR + \dduu \qquad \text{in } [0,T]\times \Omega,\\
\p_\nu \wp = 0 \qquad \text{on } [0,T]\times \po.
    \end{cases}
\end{equation}

Equation~\eqref{key PDE for wp} serves as the starting point of all our subsequent developments. It is straightforward to check that \eqref{key PDE for P} and \eqref{key PDE for wp} simultaneously satisfy the compatibility condition for the Neumann problem of the Poisson equation. For simplicity, we refer to $\wp$ as the \emph{modified pressure} from now on. Without loss of generality, we assume the normalisation condition: $$\int_\Omega \wp(y)\,\dd y = 0.$$ Indeed, the solutions $\wp$ may differ from the one with average zero by addition of some function whose gradient is a harmonic vector field, which is nontrivial on non-simply-connected $\Omega$. But such a function is in  $C^\infty(\Omega)$ when $\Omega$ is a $C^2$-domain, hence does not affect the double H\"{o}lder regularity of $\wp$.

\section{Proof of Theorem~\ref{thm}}
\label{sec: proof}

In this section, we prove our main Theorem~\ref{thm} by establishing the bound for $\wp$ in the $C^{0,2\gamma}(\Omega)$-norm from the homogeneous Neumann problem~\eqref{key PDE for wp}. The key new idea is to introduce, other than $\pd$ in \eqref{wp, def}, a second cutoff function localised in $\overline{\Omega_{\delta/2}\setminus \Omega_\delta}$. 

\subsection{The second cutoff function}\label{subsec: ed}

We begin with controlling $\left|\wp(x_1)-\wp(x_2)\right|$ for $x_1, x_2 \in\Omega$ satisfying $|x_1-x_2|\ll 1$. Let us consider any $x_1$ and $x_2$ in $\Omega$ satisfying that
\begin{equation}\label{new, choice of x1, x2} 
\begin{cases}
\text{the segment } [x_1, x_2] \subset \Omega_{10 \cdot \delta^\beta}  \text{ and } 
  c_\Omega\delta\leq  |x_1-x_2| \leq c_\Omega\delta^\beta,\\
\text{where  $\beta := \frac{d-2}{d-2+2\gamma} \in ]0,1[$ and $c_\Omega > 0$ is a geometrical constant.}
\end{cases}
\end{equation}
See Theorem~\ref{thm} for the notion of geometrical constants. Such a choice of $x_1$ and $x_2$ is possible because $\Omega$, as a bounded $C^2$-domain, satisfies the uniform interior sphere condition. That is, there exists a uniform constant $r_0>0$ depending only on the $C^2$-geometry of $\Omega$ such that for each $x \in \po$, the open ball  of radius $r_0$ tangent to $\po$ at $x$ lies entirely in $\Omega$. 

The reason for choosing $\beta:=\frac{d-2}{d-2+2\gamma}$ will become transparent later (see the formula \eqref{3.8} in the proof of Lemma~\ref{lemma: I1}). The further restriction $|x_1-x_2| \geq c_\Omega\delta$ in \eqref{new, choice of x1, x2} will not be invoked until Step~4 of the proof of Lemma~\ref{lemma: I2}. 


To resume, let $\ppsi$ be the distributional solution to the following Neumann problem, subject to $\int_\Omega\ppsi\,\dd y=0$:
\begin{equation}\label{ppsi def}
    \begin{cases}
        \Delta \ppsi = \delta_{x_1} - \delta_{x_2} \qquad \text{ in } \Omega,\\
        \p_\nu\ppsi = 0 \qquad\text{ on } \po.
    \end{cases}
\end{equation}
Equivalently, we take 
\begin{equation*}
    \ppsi(x) = \kn(x-x_1) - \kn(x-x_2)\qquad\text{for } x \in \Omega,
\end{equation*}
where $\kn$ is the \emph{Neumann Green function} on $\Omega$. It is known that
\begin{align}\label{kernel estimate}
    \left|D^\ell \kn(z)\right| \lesssim |z|^{-d+2-\ell}\qquad\text{for each } \ell \in\{0,1,2\}.
\end{align}
modulo a uniform constant depending only on the dimension $d$ and the geometry of $\Omega$.

For the claim above, when $\Omega$ is a bounded $C^{2,\alpha}$-domain in $\R^d$ for any $\alpha>0$, see \cite[Appendix~C]{dls1} for a detailed proof. Now let $\Omega$ be a bounded $C^{2}$-domain. By  standard boundary straightening  via $C^{2}$-diffeomorphisms in local charts covering the boundary $\po$, as well as an even extension of the solution across the boundary after straightening, the question is transformed to the pointwise estimates for the \emph{Dirichlet} Green function up to the second derivatives for uniformly elliptic operators with $C^2$-coefficients. In this case, the estimates of the form~\eqref{kernel estimate} for the Dirichlet Green function hold on $C^2$-domains; \emph{e.g.}, by adapting the arguments for \cite[p.120, Theorem~6.25]{gtbook}.

Observe also that $\ppsi$ is regular away from $x_1$ and $x_2$; more precisely, it is $C^2$ in the $(2\delta)$-collar of the boundary, \emph{i.e.}, $\Omega \setminus \Omega_{2\delta}$.

A natural adaptation of the arguments in \cite{note} would be testing the equation~\eqref{key PDE for wp} against $\ppsi$. However, integration by parts formally yields that $\int_\Omega (\Delta\QQ)\ppsi\,\dd x = \int_{\po} \ppsi (\p_\nu\QQ)\,\dd\Sigma$, while the right-hand side may be undefined in view of Remark~\ref{remark: key}!

The key novel ingredient of this note is the introduction of a second cut-off function, which effectively circumvents the above issue. Let $$\varpi^\delta: [0,\infty[ \longrightarrow [0,1]$$ be a smooth non-decreasing function such that
\begin{equation*}
\varpi^\delta(s) \equiv 0 \text{ for $0\leq s \leq \frac{\delta}{2}$}, \quad \varpi^\delta(s) \equiv 1 \text{ for $s \geq \delta$},\quad \text{ and } \left(\varpi^\delta\right)'\leq \frac{4}{\delta},
\end{equation*}
for  an arbitrary positive number $\delta$. Then set 
\begin{equation}\label{second cutoff}
\ed(y) := \varpi^\delta\big(\dist(y,\po)\big)\qquad \text{for } y \in \Omega.
\end{equation} 
As $\Omega$ is a $C^2$-domain, we have $\ed \in C^2(\Omega)$.

Noticing that  ${\bf supp}\,(\ed) \subset \overline{\Omega_{\delta/2}}$, we obtain via integration by parts that
\begin{align*}
&\int_\Omega (\Delta\QQ)(y)\left(\ed\ppsi\right)(y)\,\dd y \\
&\qquad= \int_\Omega \QQ(y) \Delta \left(\ed\ppsi\right)(y)\,\dd y\\
&\qquad= \int_\Omega \QQ(y)\left\{ \ppsi(\Delta\ed) +2\na\ppsi \cdot \na\ed + \ed\Delta\ppsi \right\}(y)\,\dd y.
\end{align*}
Here $\QQ\eta^\delta$ in continuous (indeed, $C^{0,\gamma}$) and $\ed \equiv 1$ on $\Omega_\delta \supset \Omega_{10\cdot\delta^\beta} \supset [x_1,x_2]$ (recall the condition~\eqref{new, choice of x1, x2}), so
\begin{align*}
\int_\Omega \QQ\ed\Delta\ppsi \,\dd y = \left(\QQ\ed\right)(x_1) - \left(\QQ\ed\right)(x_2) = \QQ(x_1)-\QQ(x_2), 
\end{align*}
where we have used the  definition of $\ppsi$ given in \eqref{ppsi def}. But by construction  ${\bf supp}\,(\QQ) \subset \Omega \setminus \Omega_{2\delta}$, disjoint from $\Omega_{10\cdot\delta^\beta}$, which implies that $$\QQ(x_1)=\QQ(x_2)=0.$$

Thus, testing the equation~\eqref{key PDE for wp} against $\left(\ed\ppsi\right)$, one obtains
\begin{align*}
\int_\Omega \left(\Delta\pbad\right)\left(\ed\ppsi\right)\,\dd y &= \int_\Omega (\QQ+\RR)\left\{\ppsi \Delta\ed +2 \na\ppsi \cdot\na\ed \right\}\,\dd y\nonumber\\
&\qquad + \int_\Omega \left[\dduu\right]\left(\ed\ppsi\right)\,\dd y.
\end{align*}
As $u$ is divergence-free, we have 
\begin{align*}
    \left[\dduu\right](y) = {\rm div}\,{\rm div}\,\left[ \big(u(y)-u(x_1)\big)\big(u(y)-u(x_2)\big) \right]. 
\end{align*}
See Silvestre \cite{note}. A further integration by parts applied to the right-most term yields that 
\begin{align*}
&\int_\Omega \left(\Delta\wp\right)\left(\ed\ppsi\right)\,\dd y \nonumber\\
&\qquad = \int_\Omega (\QQ+\RR)\left\{\ppsi \Delta\ed +2 \na\ppsi \cdot\na\ed \right\}\,\dd y\nonumber\\
&\qquad\qquad + \int_\Omega \big(u^i(y)-u^i(x_1)\big)\big(u^j(y)-u^j(x_2)\big) \p_{i}\p_j \left(\ed\ppsi\right)(y)\,\dd y\nonumber\\
&\qquad =\int_{\Omega_{\delta/2}\setminus\Omega_\delta} \left[(u\cdot\nor)^2 + \dist(y,\po)\two\left(u^\top, u^\top\right)(y_\star)\right]\cdot\nonumber\\
&\qquad\qquad\qquad\qquad\cdot\left\{\ppsi \Delta\ed +2 \na\ppsi \cdot\na\ed \right\}\,\dd y\nonumber\\
&\qquad\qquad + \int_\Omega \big(u^i(y)-u^i(x_1)\big)\big(u^j(y)-u^j(x_2)\big) \p_{i}\p_j \left(\ed\ppsi\right)(y)\,\dd y,
\end{align*}
by noting that the derivatives of $\ed$ are supported in the annulus $\overline{\Omega_{\delta/2}\setminus\Omega_\delta}$, and that $\pd\equiv 1$ thereof. 
The Einstein summation convention is adopted here. 
Finally, for the left-most term, we proceed as for the term involving $(\QQ+\RR)$ to deduce that
\begin{align*}
&\int_\Omega \left(\Delta\wp\right)\left(\ed\ppsi\right)\,\dd y \\
&\qquad = \pbad(x_1) - \pbad(x_2) + \int_\Omega \pbad\left\{\ppsi \Delta\ed +2 \na\ppsi \cdot\na\ed \right\}\,\dd y.
\end{align*}
Again, the presence of the second cutoff function $\ed$ ensures that no boundary term arises from the integration by parts.

Summarising the above two identities, we obtain that
\begin{align}\label{pbad key eq}
&\pbad(x_1) - \pbad(x_2) \nonumber\\
&\qquad=\int_{\Omega_{\delta/2}\setminus\Omega_\delta} \left[(u\cdot\nor)^2+ \dist(y,\po)\two\left(u^\top, u^\top\right)(y_\star)-\pbad\right] \cdot\nonumber\\
&\qquad\qquad\qquad\qquad \cdot\left\{\ppsi \Delta\ed +2 \na\ppsi \cdot\na\ed \right\}\,\dd y\nonumber\\
&\qquad\qquad + \int_\Omega \big(u^i(y)-u^i(x_1)\big)\big(u^j(y)-u^j(x_2)\big) \p_{i}\p_j \left(\ed\ppsi\right)(y)\,\dd y \nonumber\\
&\qquad =: I_1+I_2.
\end{align}
This is the starting point of our analysis below. 

\subsection{Estimate for $I_1$}\label{subsec: x}
Let us first bound $I_1$, the integral term over the annulus $\Omega_{\delta/2}\setminus\Omega_\delta$. We introduce the shorthand notation:
\begin{align}\label{new-M}
    M:=(u\cdot\nor)^2+ \dist(y,\po)\two\left(u^\top, u^\top\right)(y_\star)-\pbad
    \qquad\text{on $\Omega_{\delta/2}\setminus\Omega_\delta$.}
\end{align}
It has been  established in  \cite{dls1} that $M \in C^{0,\gamma}\left(\overline{\Omega_{\delta/2}\setminus\Omega_\delta}\right)$. In fact, this can be seen from the simple argument below: 
\begin{itemize}
    \item 
$(u\cdot\nor)^2 \in C^{0,\gamma}(\Omega)$ by the assumption on $u$;
\item 
$\dist(y,\po)\two\left(u^\top, u^\top\right)(y_\star)\in C^{0,\gamma}(\Omega)$ since $\two \in C^0(\po)$ on the $C^2$-domain $\Omega$, and the distance function $\dist(\bullet,\po)$ as well as the nearest point projection $y \mapsto y_\star$ are both $C^2$ in $\Omega_{\delta/2}\setminus\Omega_\delta$;
\item 
 $\pbad$ satisfies the Neumann problem~\eqref{key PDE for wp} of the form $\Delta \pbad = D^2 F$, where $F\in C^{0,\gamma}(\Omega)$ is quadratic in $u$. Thus, \emph{away from the boundary $\po$}, we have $\pbad \in C^{0,\gamma}(\Omega_{\delta/2})$.
\end{itemize}

In view of \eqref{wp, def} and the above argument, we also have $\QQ,\RR, u\otimes u \in C^{0,\gamma}(\Omega)$. Solving the standard Neumann problem from \eqref{key PDE for wp} and using the definition of $\QQ$ and $\RR$, we have $\wp \in C^{0,\gamma}(\Omega)$ and hence 
\begin{equation}\label{p, C0, June25}
\|p\|_{C^{0,\gamma}(\Omega)} \leq C\left(d,\gamma, \Omega\right)\|u\|_{C^{0,\gamma}(\Omega)}^2.
\end{equation}

In fact, we shall only use $M \in C^{0}\left(\overline{\Omega_{\delta/2}\setminus\Omega_\delta}\right)$ and $p \in C^0(\Omega)$ in the sequel. 

\begin{lemma}\label{lemma: I1}
Let $x_1, x_2 \in \Omega$ be as in condition~\eqref{new, choice of x1, x2}, and let $M$ be as in \eqref{new-M}.  The term \begin{align*}
    I_1 = \int_{\Omega_{\delta/2}\setminus\Omega_\delta} M(y)\left\{\ppsi \Delta\ed +2 \na\ppsi \cdot\na\ed \right\}(y)\,\dd y
\end{align*}
satisfies the H\"{o}lder estimate: \begin{equation*}
    |I_1| \leq C|x_1-x_2|^{2\gamma},
\end{equation*}
where the constant $C$ depends only on the dimension $d$ and $\|M\|_{C^{0}\left(\overline{\Omega_{\delta/2}\setminus\Omega_\delta}\right)}$.
\end{lemma}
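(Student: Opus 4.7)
The plan is to bound $|I_1|$ by direct pointwise estimates on the integrand, factoring out $\|M\|_{C^0(\overline{A})}$ with $A := \Omega_{\delta/2}\setminus\Omega_\delta$, and splitting into the two natural pieces
\begin{equation*}
|I_1| \leq \|M\|_{C^0(\overline{A})}\left(\int_A|\ppsi|\,|\Delta\ed|\,\dd y + 2\int_A|\nabla\ppsi|\,|\nabla\ed|\,\dd y\right).
\end{equation*}

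The main pointwise inputs are (i) the kernel bound \eqref{kernel estimate} for $\kn$ and its first two derivatives, (ii) the mean value theorem applied to $\ppsi = \kn(\cdot-x_1)-\kn(\cdot-x_2)$, which produces an extra factor of $|x_1-x_2|$ whenever $|x_1-x_2| \ll |y-x_i|$, and (iii) the cutoff estimates $|\nabla\ed| \leq 4\delta^{-1}$ and $|\Delta\ed| \lesssim \delta^{-2}$, both supported in $\overline A$ with $|A|\lesssim\delta$ (computed via boundary normal coordinates). Since $x_1, x_2\in \Omega_{10\delta^\beta}$ while $y\in A\subset\Omega\setminus\Omega_\delta$ and $\beta<1$, one has $|y-x_i|\geq 10\delta^\beta - \delta \gtrsim \delta^\beta$; the MVT then yields $|\ppsi(y)|\lesssim |x_1-x_2|\delta^{-\beta(d-1)}$ and $|\nabla\ppsi(y)|\lesssim |x_1-x_2|\delta^{-\beta d}$.

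Putting these together yields a bound of the form $|I_1|\lesssim \|M\|_{C^0}\,|x_1-x_2|\cdot\delta^{-\beta(d-1)-1}$, and the relation $\delta\approx |x_1-x_2|^{1/\beta}$ with $\beta = (d-2)/(d-2+2\gamma)$ from \eqref{new, choice of x1, x2} is precisely the choice designed to make the $\delta$-exponents collapse to the target $|x_1-x_2|^{2\gamma}$. The main obstacle I anticipate is that this crude $L^\infty$-based bound may not be sharp enough in all dimensions, and finer structural cancellation must be exploited. Two natural refinements are available and I expect at least one of them is needed. First, since $\nabla\ed = -(\varpi^\delta)'(\dist(\cdot,\po))\,\nor$ is normal to $\po$ and the Neumann condition $\p_\nu\ppsi = 0$ on $\po$ gives $\p_\nor\ppsi(y_\star) = 0$, Taylor expansion at the nearest boundary projection $y_\star$ yields $|\p_\nor\ppsi(y)|\lesssim \dist(y,\po)\cdot|x_1-x_2|\cdot(\delta^\beta)^{-(d+1)}$, gaining an extra factor of $\dist(y,\po)\leq\delta$ for the $\nabla\ppsi\cdot\nabla\ed$ piece. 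Second, integration by parts together with the smoothness of $\varpi^\delta$ at the endpoints $\delta/2$ and $\delta$ (so that $\nabla\ed$ vanishes on $\p A$) and the identity $\int_{\{\dist=\delta\}}\p_\nor\ppsi\,\dd S = 0$ (from Green's theorem applied on $\Omega_\delta$, using $\int\Delta\ppsi = 0$) yield the zero-mean identities $\int_A\ppsi\Delta\ed\,\dd y = 0$ and $\int_A\nabla\ppsi\cdot\nabla\ed\,\dd y = 0$, permitting the subtraction of locally adapted constants from $M$ before integrating, and allowing finer oscillation estimates on $\ppsi$ to be used. Performing the remaining integration over $A$ in boundary normal coordinates, and splitting the tangential integral into a near-zone (within distance $\lesssim \delta^\beta$ of the boundary projection of $x_i$) and a far-zone, should close the argument and give the desired bound $|I_1|\leq C|x_1-x_2|^{2\gamma}$ with $C$ depending only on $d$ and $\|M\|_{C^0(\overline{A})}$.
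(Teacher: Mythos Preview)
There is a genuine gap in your plan. The paper's argument rests on a single device you have not used: interpolation between the two available pointwise bounds on $\ppsi$. On $A=\Omega_{\delta/2}\setminus\Omega_\delta$ one has both the direct kernel bound $|\ppsi(y)|\lesssim\delta^{\beta(2-d)}$ (from $|y-x_i|\gtrsim\delta^\beta$ and \eqref{kernel estimate} with $\ell=0$) and the MVT bound $|\ppsi(y)|\lesssim|x_1-x_2|\,\delta^{\beta(1-d)}$ that you wrote down. The paper raises the first to the power $1-2\gamma$ and the second to the power $2\gamma$, obtaining
\[
|\ppsi(y)|\lesssim|x_1-x_2|^{2\gamma}\,\delta^{\beta(2-d-2\gamma)},
\]
and performs the analogous interpolation for $\nabla\ppsi$. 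This manufactures the factor $|x_1-x_2|^{2\gamma}$ \emph{pointwise}, with no appeal to any relation between $\delta$ and $|x_1-x_2|$; the choice $\beta=(d-2)/(d-2+2\gamma)$ then gives $\beta(2-d-2\gamma)=-(d-2)$, which is precisely the exponent that balances the remaining $\delta$-factors in the paper's computation.

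Your route instead keeps only the MVT bound (producing $|x_1-x_2|^1$) and tries to convert via ``$\delta\approx|x_1-x_2|^{1/\beta}$''. But condition~\eqref{new, choice of x1, x2} only says $c_\Omega\delta\leq|x_1-x_2|\leq c_\Omega\delta^\beta$, not a two-sided equivalence, and the lemma must hold uniformly over this whole range. More concretely, substituting $\delta=|x_1-x_2|^{1/\beta}$ into your crude bound $|x_1-x_2|\,\delta^{-\beta(d-1)-1}$ yields $|x_1-x_2|^{2-d-1/\beta}$, which for $d\geq3$ is a large \emph{negative} exponent; the collapse to $|x_1-x_2|^{2\gamma}$ you anticipate simply does not occur. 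Your proposed refinements do not close this gap either: the Neumann-boundary trick for $\nabla\ppsi\cdot\nabla\ed$, as you state it (with both the $\dist(y,\po)$ gain and the $|x_1-x_2|$ factor), would require a $D^3\kn$ bound not furnished by \eqref{kernel estimate} on a $C^2$ domain, and in any case says nothing about the $\ppsi\,\Delta\ed$ piece; the zero-mean identity $\int_A\ppsi\,\Delta\ed=0$ only permits replacing $M$ by $M-c$, which cannot improve an estimate recorded in terms of $\|M\|_{C^0}$. The actual fix is much simpler than any of these: interpolate.
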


\begin{proof}[Proof of Lemma~\ref{lemma: I1}]
We split $I_1=I_{1,1}+I_{1,2}$, where
\begin{align*}
&I_{1,1} := \int_{\Omega_{\delta/2}\setminus\Omega_\delta} M \ppsi \Delta\ed\,\dd y,\\
&I_{1,2} :=2\int_{\Omega_{\delta/2}\setminus\Omega_\delta} M \na\ppsi \cdot\na\ed\,\dd y.
\end{align*}

For $I_{1,1}$, we have the following pointwise estimates for the kernel function $\ppsi(y)\equiv\kn(y-x_1)-\kn(y-x_2)$ with $y \in \Omega_{\delta/2}\setminus\Omega_\delta$:
\begin{align*}
\left|\ppsi(y)\right| \leq C_1 \left\{|y-x_1|^{2-d} + |y-x_2|^{2-d} \right\} \leq C_2 \delta^{\beta(2-d)}
\end{align*}
and
\begin{align*}
   \left|\ppsi(y)\right| \leq \left(\sup_{\xi \in [x_1,x_2]}\left|\na \kn(y-\xi)\right|\right) |x_1-x_2| \leq C_3 \delta^{\beta(1-d)}|x_1-x_2|,
\end{align*}
where $C_i$ are dimensional constants and $x_1, x_2 \in \Omega$ are as in condition~\eqref{new, choice of x1, x2}. In the above we used the Taylor expansion, the inequalities $|y-\xi| \geq |\xi|-|y| \geq 10\delta^\beta - \delta/2 \geq 9\delta^\beta$, and the estimate~\eqref{kernel estimate} for the Neumann Green function.

Combining the two estimates above, we may bound
\begin{align*}
      \left|\ppsi(y)\right| &\leq C_4 \left\{\delta^{\beta(2-d)}\right\}^{1-2\gamma} \cdot \left\{\delta^{\beta(1-d)}|x_1-x_2|\right\}^{2\gamma}\\
      &= C_4 \delta^{\beta(2-d-2\gamma)}\cdot |x_1-x_2|^{2\gamma}
\end{align*}
for some $C_4=C_4(d)$. Hence, noting that $\left|\Delta\ed(y)\right|\lesssim {\delta^{-2}}$, we obtain 
\begin{align*}
    |I_{1,1}| &\leq C_5\, {\rm Volume}\left(\Omega_{\delta/2}\setminus\Omega_\delta\right) \cdot  \delta^{-2} \cdot  \delta^{\beta(2-d-2\gamma)}\cdot |x_1-x_2|^{2\gamma}\\
    &\leq C_6 \,\delta^{\beta(2-d-2\gamma) +d-2}\cdot |x_1-x_2|^{2\gamma},
\end{align*}
where $C_5$ and $C_6$ depend only on $d$ and $\|M\|_{C^{0}\left(\overline{\Omega_{\delta/2}\setminus\Omega_\delta}\right)}$. Substituting in the choice of parameter 
\begin{equation}\label{3.8}\beta := \frac{d-2}{d-2+2\gamma},
\end{equation}
we arrive at 
\begin{align}\label{I11}
    |I_{1,1}| \leq C_6|x_1-x_2|^{2\gamma}.
\end{align}
In particular, $C_6$ is uniform in $\delta$.

Next let us estimate $I_{1,2}$. For $y \in \Omega_{\delta/2}\setminus\Omega_\delta$, it holds that
\begin{align*}
\left|\na \ppsi(y)\right| \leq C_7 \left\{|y-x_1|^{1-d} + |y-x_2|^{1-d} \right\} \leq C_8 \delta^{\beta(1-d)},
\end{align*}
as well as that
\begin{align*}
   \left|\na \ppsi(y)\right| \leq \left(\sup_{\xi \in [x_1,x_2]}\left|D^2 \kn(y-\xi)\right|\right) |x_1-x_2| \leq C_9 \delta^{-\beta d}|x_1-x_2|
\end{align*}
for $x_1, x_2 \in \Omega$ as in condition~\eqref{new, choice of x1, x2}. As before, we used here the Taylor expansion, the inequalities $|y-\xi| \geq  9\delta^\beta$, and the estimate~\eqref{kernel estimate}. Thus
\begin{align*}
\left|\na \ppsi(y)\right| &\leq C_{10} \left\{\delta^{\beta(1-d)}\right\}^{1-2\gamma} \cdot \left\{\delta^{-\beta d}|x_1-x_2|\right\}^{2\gamma}\\
&= C_{10} \delta^{\beta(1-d-2\gamma)} \cdot |x_1-x_2|^{2\gamma}.
\end{align*}
The above constants $C_k$ ($7\le k\le 10$) are dimensional. In view of the choice of $\beta$ given in \eqref{3.8} and that $\left|\na\ed(y)\right|\lesssim {\delta^{-1}}$, we thus have
\begin{align}\label{I12}
    |I_{1,2}| &\leq C_{11} {\rm Volume}\left(\Omega_{\delta/2}\setminus\Omega_\delta\right) \cdot  \delta^{-1} \cdot  \delta^{\beta(1-d-2\gamma)}\cdot |x_1-x_2|^{2\gamma}\nonumber\\
    &\leq C_{12} \delta^{\beta(1-d-2\gamma) +d-1}\cdot |x_1-x_2|^{2\gamma}\nonumber\\
    &= C_{12}\delta^{d-1 - \frac{d-2}{d-2+2\gamma}\cdot(d-1+2\gamma)} \cdot |x_1-x_2|^{2\gamma}.
\end{align}
Here $C_{11}$ and $C_{12}$ depend on $d$ and $\|M\|_{C^{0}\left(\overline{\Omega_{\delta/2}\setminus\Omega_\delta}\right)}$ only. The index $d-1 - \frac{d-2}{d-2+2\gamma}\cdot(d-1+2\gamma)$ on the right-most term of \eqref{I12} is strictly positive. 
The assertion in Lemma~\ref{lemma: I1} now follows from the inequalities~\eqref{I11} and \eqref{I12}.  
\end{proof}

 \subsection{Estimate for $I_2$}\label{subsec: y}
For this purpose, we establish the following
\begin{lemma}\label{lemma: I2}
Let $x_1, x_2 \in \Omega$ be as in condition~\eqref{new, choice of x1, x2}.   The term \begin{align}\label{I2, expression}
    I_2:=\int_\Omega \big(u^i(y)-u^i(x_1)\big)\big(u^j(y)-u^j(x_2)\big) \p_{i}\p_j \left(\ed\ppsi\right)(y)\,\dd y
\end{align}
satisfies the H\"{o}lder estimate: \begin{equation*}
    |I_2| \leq C|x_1-x_2|^{2\gamma},
\end{equation*}
where  $C$ depends only on $d$, $\gamma$, $\|u\|_{C^{0,\gamma}(\Omega)}$, and the $C^2$-geometry of $\Omega$.
\end{lemma}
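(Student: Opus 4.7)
The plan is to expand $I_2$ via the Leibniz rule on $\p_i\p_j\group$. Setting $v^i_k(y) := u^i(y) - u^i(x_k)$ (so that $|v^i_k(y)| \leq [u]_{C^{0,\gamma}(\Omega)}|y-x_k|^\gamma$), I would write
\begin{align*}
I_2 = J_a + J_b + J_c,
\end{align*}
where $J_a := \int_\Omega v^i_1 v^j_2 \,\ed\, \p_i\p_j\ppsi\,\dd y$ is the principal contribution supported in $\overline{\Omega_{\delta/2}}$, while
\begin{align*}
J_b := \int_\Omega v^i_1 v^j_2\, \ppsi\,\p_i\p_j\ed\,\dd y,\qquad J_c := 2\int_\Omega v^i_1 v^j_2 \,\p_{(i}\ed\,\p_{j)}\ppsi\,\dd y
\end{align*}
are the annular corrections supported in $\overline{\Omega_{\delta/2}\setminus\Omega_\delta}$.

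The corrections $J_b$ and $J_c$ should admit estimates parallel to those in Lemma~\ref{lemma: I1} (with $v^i_1 v^j_2$ playing the role of $M$ therein): using the interpolated bounds $|\ppsi(y)|\lesssim \delta^{\beta(2-d-2\gamma)}r^{2\gamma}$ and $|\na\ppsi(y)|\lesssim \delta^{\beta(1-d-2\gamma)}r^{2\gamma}$ on the annulus --- derived by combining \eqref{kernel estimate} with cancellation between $\kn(y-x_1)$ and $\kn(y-x_2)$ --- together with $|D^2\ed|\lesssim \delta^{-2}$, $|\na\ed|\lesssim \delta^{-1}$, and the uniform bound $|v^i_1 v^j_2|\lesssim \|u\|_{C^{0,\gamma}(\Omega)}^2$, the powers of $\delta$ collapse thanks to the choice $\beta = (d-2)/(d-2+2\gamma)$ from \eqref{3.8}, yielding $|J_b|,|J_c|\lesssim r^{2\gamma}$ uniformly in $\delta$. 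The lower constraint $|x_1-x_2|\geq c_\Omega\delta$ of \eqref{new, choice of x1, x2} --- not invoked in Lemma~\ref{lemma: I1} --- is expected to enter precisely at this stage, as the Step~4 alluded to in \S\ref{subsec: ed}, to absorb residual $\delta$-factors into powers of $r$.

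For the principal term $J_a$, I would employ a Silvestre-style near/far decomposition around the midpoint $\bar x := \tfrac{1}{2}(x_1+x_2)$ at scale $r := |x_1-x_2|$. Split $\Omega_{\delta/2} = A \cup B$ with $A := {\bf B}_{4r}(\bar x)\cap \Omega_{\delta/2}$ and $B := \Omega_{\delta/2}\setminus A$. On $A$, \eqref{kernel estimate} gives $|\p_i\p_j\ppsi|\lesssim |y-x_1|^{-d}+|y-x_2|^{-d}$, and combined with $|v^i_1 v^j_2|\lesssim |y-x_1|^\gamma|y-x_2|^\gamma$ the integrand is locally integrable; a change of variables $z = y-x_k$ then produces $\int_A |v^i_1 v^j_2||\p_i\p_j\ppsi|\,\dd y\lesssim r^{2\gamma}$, using $\gamma > 0$. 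On the far region $B$, where $|y-x_k|\approx|y-\bar x|\geq 3r$, a direct uncancelled bound would merely give an $O(1)$ contribution, so cancellation between $\p_i\p_j\kn(y-x_1)$ and $\p_i\p_j\kn(y-x_2)$ must be exploited. This can be accomplished via a H\"{o}lder-difference estimate of the form $|D^2\kn(z)-D^2\kn(z')|\lesssim R^{-d-\alpha}|z-z'|^\alpha$ (for comparable $|z|,|z'|\approx R$ and some $\alpha\in(2\gamma,1)$), extracted from the Schauder regularity of the Neumann Green function on the $C^2$-domain $\Omega$. This yields $|\p_i\p_j\ppsi(y)|\lesssim r^\alpha|y-\bar x|^{-d-\alpha}$, whence $|J_a|\big|_B\lesssim r^\alpha\int_B|y-\bar x|^{2\gamma-d-\alpha}\,\dd y\lesssim r^{2\gamma}$. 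The main obstacle I anticipate lies here: whereas Lemma~\ref{lemma: I1} extracts cancellation at the level of $\na\kn$, which is directly controlled by \eqref{kernel estimate}, cancellation at the level of $D^2\kn$ without access to a companion pointwise $D^3$-bound on a merely $C^2$-domain is the delicate point on which the argument will hinge.
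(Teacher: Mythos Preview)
Your plan is broadly correct but differs from the paper in two places, and the obstacle you flag is handled more directly than you anticipate.

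\emph{Order of decomposition and the annular terms.} The paper reverses your order: it first splits $\Omega$ into $\Omega_{\rm near}=\{|y-\bar x|\le 5\rho\}$ and $\Omega_{\rm far}$ (with $\rho:=|x_1-x_2|$), and only then expands $D^2(\ed\ppsi)$ by Leibniz on $\Omega_{\rm far}$; on $\Omega_{\rm near}$ the derivatives of $\ed$ vanish since $\Omega_{\rm near}\subset\Omega_{5\delta^\beta}$ under~\eqref{new, choice of x1, x2}, so the near part matches your estimate for $J_a$ on $A$. For the annular Leibniz pieces on $\Omega_{\rm far}$ the paper does \emph{not} recycle the interpolated kernel bounds of Lemma~\ref{lemma: I1}. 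Instead it uses the mean-value estimates $|D^\ell\ppsi(y)|\lesssim\rho\,|y-\bar x|^{1-d-\ell}$ for $\ell=0,1$ together with $|v_1^iv_2^j|\lesssim|y-\bar x|^{2\gamma}$, the pointwise lower bound $|y-\bar x|\ge 5\rho$, and the annulus volume $\lesssim\delta^d$; this produces factors $\rho^{2\gamma}(\delta/\rho)^{d-1}$ and $\rho^{2\gamma}(\delta/\rho)^{d-2}$, and \emph{this} is where the constraint $\rho\ge c_\Omega\delta$ of~\eqref{new, choice of x1, x2} is invoked. Your alternative route for $J_b,J_c$ (bounded $v_1v_2$ plus the interpolated $\ppsi$-bounds, exactly as in Lemma~\ref{lemma: I1}) is also valid and in fact does not require the lower constraint at all --- so your expectation that it enters there is misplaced under your own scheme.

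\emph{The far principal term.} For the $\ed D^2\ppsi$ contribution on $\Omega_{\rm far}$ the paper takes the direct $\alpha=1$ route: it asserts the Lipschitz-type cancellation $|D^2\ppsi(y)|\lesssim\rho\,|y-\bar x|^{-d-1}$ by Taylor expansion and~\eqref{kernel estimate}, whence $\int_{\Omega_{\rm far}}|y-\bar x|^{2\gamma}\cdot\rho\,|y-\bar x|^{-d-1}\,\dd y\lesssim\rho^{2\gamma}$ using $2\gamma<1$. No Schauder machinery is invoked; the mean-value bound on $D^2\kn$ is simply treated as part of the same kernel-estimate package as~\eqref{kernel estimate}. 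Your proposed H\"older-difference route with $\alpha\in(2\gamma,1)$ would also close the integral, but it is strictly more work and, on a merely $C^2$-domain, is no easier to justify than the $\alpha=1$ bound the paper uses; it buys nothing over the paper's approach.
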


This lemma follows essentially from Silvestre \cite[Section~1.2]{note}. Some new estimates are needed to deal with the technicalities brought about by the second cutoff function $\ed$. 

\begin{proof}[Proof of Lemma~\ref{lemma: I2}]

We divide our arguments into five steps below.

\smallskip
\noindent
{\bf 1.} We first remark that the expression~\eqref{I2, expression} for $I_2$ makes sense, despite that $\p_{i}\p_j \left(\ed\ppsi\right)$ has singularities at $y=x_1$ and $x_2$. Indeed, one may write the integral in the principal value meaning: $${\rm p.v.}\int_
\Omega \left\{\cdots\right\}\,\dd y = \lim_{\e\searrow 0} \int_{\Omega \setminus ( {\bf B}_\e(x_1) \cap {\bf B}_\e(x_2))}\left\{\cdots\right\}\,\dd y.$$  Near $x_1$ we have that $\big|u^i(y)-u^i(x_1)\big|\lesssim |y-x_1|^{\gamma}$ and $\left| \p_{i}\p_j \left(\ed\ppsi\right)(y)\right|\lesssim |y-x_1|^{-d}$, thanks to \eqref{kernel estimate}. Thus, their product is controlled by $|y-x_1|^{-d+\gamma}$, which is locally integrable in ${\bf B}_\e(x_1)$ and hence vanishes in the limit $\e\searrow 0$. The argument near $x_2$ is completely parallel.

\smallskip
\noindent
{\bf 2.}  Denote
\begin{align*}
    \Bar{x}:=\frac{x_1+x_2}{2}\qquad\text{and}\qquad \rho = |x_1-x_2|,
\end{align*}
and also set
\begin{align*}
    {\Omega_{\rm far}}:= \left\{y \in \Omega:\, |y-\bar{x}|> 5\rho\right\} \qquad\text{and}\qquad  {\Omega_{\rm near}}:= \left\{y \in \Omega:\, |y-\bar{x}|\leq 5\rho\right\} .
\end{align*}
We split $I_2$ into
\begin{align*}
I_2 &= I_{2,1}+I_{2,2}\\
&:= \left\{ \int_{\Omega_{\rm near}} + \int_{\Omega_{\rm far}} \right\} \big(u^i(y)-u^i(x_1)\big)\big(u^j(y)-u^j(x_2)\big) \p_{i}\p_j \left(\ed\ppsi\right)(y)\,\dd y.
\end{align*}

\smallskip
\noindent
{\bf 3.} For $I_{2,1}$, we proceed in a way similar to that given in \cite[the end of p.2]{note}.

For any $y \in \Omega_{\rm near}$, we have $|y-\bar{x}| \leq 5\rho \leq 5c_\Omega\delta^\beta$. (Recall from condition~\eqref{new, choice of x1, x2} that $\rho=|x_1-x_2|\leq c_\Omega\delta^\beta$ for $\beta=\frac{d-2}{d-2+2\gamma}$, and that the segment $[x_1, x_2] \subset \Omega_{10 \cdot \delta^\beta}$.) The triangle inequality yields that so $y \in \overline{\Omega_{5\delta^\beta}}$. In particular, $y$ is not in the annulus $\overline{\Omega_{\delta/2}\setminus\Omega_\delta}$, which contains the support of derivatives of $\ed$ (see \eqref{second cutoff}). We thus have the identity
\begin{align*}
 D^2\group \equiv \ed D^2\ppsi\qquad\text{on } \Omega_{\rm near},
\end{align*}
which implies that
\begin{align*}
    \left|D^2\group (y)\right| \1_{\Omega_{\rm near}}(y) &\leq \left|D^2\ppsi (y)\right|\\
    &\leq \left|D^2\kn (y-x_1)\right|+\left|D^2\kn (y-x_2)\right|.
\end{align*}

So, one has that
\begin{align*}
\left|I_{2,1}\right|&\leq \int_{\Omega_{\rm near}}\big|u^i(y)-u^i(x_1)\big|\big|u^j(y)-u^j(x_2)\big|\\
&\qquad\qquad\qquad\qquad \times\Big\{\left|\p_i\p_j\kn (y-x_1)\right|+\left|\p_i\p_j\kn (y-x_2)\right|\Big\}\,\dd y\\
&\leq C\|u\|_{C^{0,\gamma}(\Omega)} \rho^\gamma\Bigg\{\int_{\Omega_{\rm near}}\left|u(y)-u(x_1)\right|\left|D^2\kn(y-x_1)\right|\,\dd y\\
&\qquad\qquad\qquad\qquad  + \int_{\Omega_{\rm near}}\left|u(y)-u(x_2)\right|\left|D^2\kn(y-x_2)\right| \Bigg\}\,\dd y,
\end{align*}
where $C$ is a universal constant. Furthermore, we have
\begin{align*}
&\int_{\Omega_{\rm near}}\left|u(y)-u(x_1)\right|\left|D^2\kn(y-x_1)\right|\,\dd y \\
&\qquad\qquad\leq C [u]_{C^{0,\gamma}(\Omega)}\int_{\Omega_{\rm near}} \frac{1}{|y-x_1|^{d-\gamma}}\,\dd y
\end{align*}
for some $C=C(d)$. Thanks to the triangle inequality, it holds that 
\begin{align*}
\int_{\Omega_{\rm near}} \frac{1}{|y-x_1|^{d-\gamma}}\,\dd y &\leq \int_{{\bf B}_{5.5\rho}(x_1)}\frac{1}{|y-x_1|^{d-\gamma}}\,\dd y \\
&= C\rho^\gamma
\end{align*}
for another dimensional constant $C=C(d)$. The estimate for the term $\int_{\Omega_{\rm near}}\left|u(y)-u(x_2)\right|\left|D^2\kn(y-x_2)\right|$ is completely parallel.

Therefore, collecting the estimates above, we arrive at
\begin{equation}\label{I21 estimate}
    |I_{2,1}| \leq C|x_1-x_2|^{2\gamma},
\end{equation}
where $C$ depends on $d$ and the H\"{o}lder norm $\|u\|_{C^{0,\gamma}(\Omega)}$. 

\smallskip
\noindent
{\bf 4.} Finally we turn to $I_{2,2}$. This is the most technical term, since on $\Omega_{\rm far}$ the derivatives of the second cutoff function $\ed$ are not everywhere vanishing. Our treatment is reminiscent of the proof of Lemma~\ref{lemma: I1}.

For this purpose, we write
\begin{align*}
    D^2\group = \left(D^2 \ed\right) \ppsi + 2\na\ed\otimes\na\ppsi + \ed D^2\ppsi\qquad\text{on } \Omega_{\rm far}.
\end{align*}
Utilising the Taylor expansion, triangle inequality, and the estimate~\eqref{kernel estimate} for the Neumann Green function, we deduce that 
\begin{align*}
\left|D^\ell\ppsi(y)\right| \leq C\frac{\rho}{|y-\bar{x}|^{d+\ell-1}} \qquad\text{for } \ell \in \{0,1,2\},
\end{align*}
where $C=C(d,\ell)$. This together with $\left|D^\ell\ed\right|\lesssim \delta^{-\ell}$ gives us
\begin{align}\label{I22 estimate}
    |I_{2,2}| &\leq C[u]^2_{C^{0,\gamma}(\Omega)} \int_{\Omega_{\rm far}} |y-x_1|^\gamma |y-x_2|^\gamma  \nonumber\\
    &\qquad \times\left\{\frac{\rho}{|y-\bar{x}|^{d+1}} + \left(\frac{\rho}{\delta|y-\bar{x}|^{d}}+\frac{\rho}{\delta^2|y-\bar{x}|^{d-1}} \right)\1_{{\Omega_{\delta/2}\setminus\Omega_\delta}}\right\}\,\dd y.
\end{align}
Here we recall from \eqref{new, choice of x1, x2} that $\rho=|x_1-x_2| \leq c_
\Omega\delta^\beta$.

The integral $\int_{\Omega_{\rm far}} |y-x_1|^\gamma |y-x_2|^\gamma \frac{\rho}{|y-\bar{x}|^{d+1}} \,\dd y$ can be treated similarly as in \cite[Section~1.2]{note}. In view of the triangle inequality and $0<\gamma<1$, we have
\begin{align*}
    |y-x_1|^\gamma &\leq |y-\bar{x}|^\gamma  + \left(\frac{\rho}{2}\right)^\gamma \\
    &\leq |y-\bar{x}|^\gamma  + \left(\frac{ |y-\bar{x}|}{10}\right)^\gamma \leq 2|y-\bar{x}|^\gamma \qquad\text{for } y \in \Omega_{\rm far},
\end{align*}
and analogously $|y-x_2|^\gamma\leq 2|y-\bar{x}|^\gamma$. Thus, for any $R>0$ so large that $\Omega \subset {\bf B}_R(\bar{x})$, one may estimate
\begin{align}\label{xxx1}
&\int_{\Omega_{\rm far}} |y-x_1|^\gamma |y-x_2|^\gamma \frac{\rho}{|y-\bar{x}|^{d+1}} \,\dd y\nonumber\\
&\qquad\leq C\rho \int_{{\bf B}_R(\bar{x})\setminus {\bf B}_{5\rho}(\bar{x})}\frac{1}{|y-\bar{x}|^{d+1-2\gamma}}\,\dd y \nonumber\\
&\qquad=C(d, \gamma) \rho \big[-s^{-1+2\gamma}\Big]^R_{5\rho}  \nonumber\\
&\qquad\leq C(d,\gamma) \rho^{2\gamma},
\end{align}
by noting $0<\gamma<1/2$.

Now we proceed to the control for  $$\int_{\Omega_{\delta/2}\setminus\Omega_\delta} |y-x_1|^\gamma |y-x_2|^\gamma \left\{\frac{\rho}{\delta|y-\bar{x}|^{d}}+\frac{\rho}{\delta^2|y-\bar{x}|^{d-1}}\right\} \,\dd y.$$ 
Notice here that $\overline{\Omega_{\delta/2}\setminus\Omega_\delta} \subset \Omega_{\rm far}$. As in the previous paragraph, this is bounded by
\begin{align*}
C\int_{\Omega_{\delta/2}\setminus\Omega_\delta} \left\{\frac{\rho}{\delta|y-\bar{x}|^{d-2\gamma}}+\frac{\rho}{\delta^2|y-\bar{x}|^{d-1-2\gamma}}\right\} \,\dd y,
\end{align*}
where $C$ is a universal constant. Then, making use of $|y-\bar{x}| \geq 5\rho$ (recall the definition of $\Omega_{\rm far}$) and ${\rm Volume}\left(\Omega_{\delta/2}\setminus\Omega_\delta\right)\leq C(d)\cdot\delta^{d}$, we may further bound the above expression by 
\begin{align*}
C(d)\rho^{2\gamma}\left\{ \rho^{-d+1} \cdot\delta^{d-1} + \rho^{-d+2} \cdot \delta^{d-2} \right\}.
\end{align*}
As  $\rho \geq c_\Omega\delta$ by condition~\eqref{new, choice of x1, x2}, this is less than or equal to $$C(d,\Omega)\rho^{2\gamma}.$$ 

Therefore, we conclude that 
\begin{small}
\begin{equation}\label{xxx2}
\int_{\Omega_{\delta/2}\setminus\Omega_\delta} |y-x_1|^\gamma |y-x_2|^\gamma \left\{\frac{\rho}{\delta|y-\bar{x}|^{d}}+\frac{\rho}{\delta^2|y-\bar{x}|^{d-1}}\right\} \,\dd y \leq C(d,\Omega)\rho^{2\gamma}.
\end{equation}
\end{small}

\smallskip
\noindent
{\bf 5.} We complete the proof of Lemma~\ref{lemma: I2} by putting together the estimates obtained in \eqref{I21 estimate}, \eqref{I22 estimate}, \eqref{xxx1}, and \eqref{xxx2} respectively.  \end{proof}

\subsection{Proof of Theorem~\ref{thm}} \label{subsec: z} From \eqref{pbad key eq} and Lemmas~\ref{lemma: I1} $\&$ \ref{lemma: I2}, we deduce
\begin{align*}
\left|\pbad(x_1) - \pbad(x_2)\right| \leq C|x_1-x_2|^{2\gamma},
\end{align*}
\emph{provided that the condition~\eqref{new, choice of x1, x2} is verified.} That is, the above estimate holds when $[x_1, x_2] \subset \Omega_{10 \cdot \delta^\beta}$ and $c_\Omega\delta\leq  |x_1-x_2| \leq c_\Omega\delta^\beta$, where  $0<\beta<1$ is a fixed number given in \eqref{3.8}, and $c_\Omega>0$ is geometrical. The constant may be chosen as $C=C(d,\gamma) \cdot C_{\rm geom} \cdot \|u\|_{C^{0,\gamma}(\Omega)}^2.$ Then, in view of the definition of $\wp$ (see \eqref{wp, def}), we have $\wp=p$ in $\Omega \setminus {\bf supp}(\pd)$, where $p$ is the pressure we need to estimate. But ${\bf supp}(\pd) \subset \Omega \setminus \Omega_{2\delta}$ while $[x_1, x_2] \subset \Omega_{10\cdot\delta^\beta}$, so $x_1, x_2 \notin {\bf supp}(\pd)$. We thus obtain the desired estimate \eqref{thm, est} in Theorem~\ref{thm}, for those $x_1$, $x_2$ satisfying the condition~\eqref{new, choice of x1, x2}. 

Now, for any given $x_1, x_2 \in \Omega$, we set $$\kappa := \min\Big\{{\rm dist}(x_1, \p\Omega), {\rm dist}(x_2, \p\Omega)\Big\}>0.$$ 
Let $c_\Omega$ be the geometrical constant as in the condition~\eqref{new, choice of x1, x2}, and consider the two separate cases below:

\smallskip
\noindent
\underline{Case~1: $\rho=|x_1-x_2| \geq c_\Omega(12^{-1}\kappa)^{\frac{1}{\beta}}$ or $\rho \geq \left(c_\Omega\right)^{-\frac{\beta}{1-\beta}}$.} If the former holds, we deduce from the estimate~\eqref{p, C0, June25} (in fact, we only need the $C^0$-bound for $p$; see the comments ensuing this estimate) that
\begin{align}\label{big, June25}
\frac{\left|p(x_1)-p(x_2)\right|}{|x_1-x_2|^{2\gamma}} &\leq 2\cdot 12^{\frac{2\gamma}{\beta}} \|p\|_{C^0(\Omega)}\left(c_\Omega\right)^{-2\gamma}\kappa^{-\frac{2\gamma}{\beta}}\nonumber\\
&\leq C(d,\gamma,\Omega)\|u\|^2_{C^{0,\gamma}(\Omega)} \kappa^{-\frac{2\gamma}{\beta}}.
\end{align}
If the latter holds, then the analogous argument yields that
\begin{align}\label{big,, June25}
\frac{\left|p(x_1)-p(x_2)\right|}{|x_1-x_2|^{2\gamma}} &\leq C(d,\gamma,\Omega)\|u\|^2_{C^{0,\gamma}(\Omega)}.
\end{align}

\smallskip
\noindent
\underline{Case~2: $\rho=|x_1-x_2|< \min\left\{c_\Omega(12^{-1}\kappa)^{\frac{1}{\beta}},\,  \left(c_\Omega\right)^{-\frac{\beta}{1-\beta}}\right\}$.} In this case, taking $$\delta := \frac\rho{c_\Omega}$$ leads to $\kappa > 12\delta^\beta$. On the other hand,  $\rho <  \left(c_\Omega\right)^{-\frac{\beta}{1-\beta}}$ implies that $\rho \leq \delta^\beta$. Hence, we obtain $[x_1, x_2] \subset \Omega_{10\delta^\beta}$, by replacing $c_\Omega$ with a larger geometric constant if necessary.

In view of the above paragraph, the condition~\eqref{new, choice of x1, x2} is verified in this case. Thus, by the arguments at the beginning of this subsection, one has that
\begin{align}\label{small, June25}
\left|p(x_1) - p(x_2)\right| \leq C(d,\gamma,\Omega) \|u\|_{C^{0,\gamma}(\Omega)}^2 |x_1-x_2|^{2\gamma}.
\end{align}
In equations~\eqref{big, June25}, \eqref{big,, June25}, and \eqref{small, June25} above, all the constants $C(d,\gamma,\Omega)$ depends only on $d$, $\gamma$, and the $C^2$-geometry of $\Omega$. Putting these estimates together, we obtain
\begin{align}
\frac{\left|p(x_1)-p(x_2)\right|}{|x_1-x_2|^{2\gamma}} \leq C(d,\gamma,\Omega)\|u\|^2_{C^{0,\gamma}(\Omega)} \left(1+\kappa^{-\frac{2\gamma}{\beta}}\right).
\end{align}

This completes the proof of Theorem~\ref{thm}. We also obtain Remark~\ref{remark: constant} by keeping track of the dependence of constants in~\eqref{big, June25}, \eqref{big,, June25}, and \eqref{small, June25}.

\section{Concluding remarks}\label{sec: concl}

In this note, we have given an alternative, elementary proof for the interior double H\"{o}lder regularity of the Euler pressure $p$ in bounded $C^2$-domains of dimension greater than or equal to $3$. We have also obtained refined characterisations for the behaviour of the quantity $|p(x_1)-p(x_2)|/|x_1-x_2|^{2\gamma}$ as $x_1, x_2$ approaches the boundary $\po$ for $\gamma \in ]0,1/2[$.  

We conclude with the following remarks.

\subsection{Two-dimensional case}
The proof of Theorem~\ref{thm} above requires $d\geq 3$. In dimension 2, the Neumann Green function satisfies $|D^\ell\kn(z)|\lesssim |D^\ell\log\,z|$ for $\ell\in\{0,1,2\}$, as opposed to the bound~\eqref{kernel estimate}. An adaptation of the arguments in this note should lead to the same statement of Theorem~\ref{thm} with $d=2$.

\subsection{Unbounded domains} 
It would be interesting to investigate the case for an unbounded domain $\Omega\subset\R^d$. We expect that the double H\"{o}lder regularity for $p$ remains valid for unbounded domains \emph{with bounded geometry} in the sense of Schick \cite{sch}: 
\begin{itemize}
    \item 
     the boundary $\po$ has a uniform geodesic $r$-collar for some $r>0$;
     \item 
     the boundary $\po$ has positive injectivity radius;
     \item 
     $\Omega_{r/3}$ in the interior has positive injectivity radius;
     \item 
     the second fundamental form $\two$ of $\po$ has uniform $C^\ell$-bounds for every $\ell \in \mathbf{N}$. 
\end{itemize}
See also Disconzi, Shao and Simonett \cite{dss} for an equivalent characterisation.

In fact, we expect that this is valid when the last condition is replaced by the weaker condition: ``\emph{$\two$ has a uniform $C^0$-bounds over $\po$}''. In this case, one may say that the unbounded domain $\Omega$ has \emph{bounded $C^2$-geometry}. 

The proof should follow from the existence of a ``tame'' partition of unity consisting of boundary charts of comparable diameters, in which the second fundamental forms has bounded $C^0$-norms all comparable to each other. See Ammann, Gro{\ss}e and Nistor \cite{agn} for details.

\bigskip
\noindent
{\bf Acknowledgement}.
Both authors thank the anonymous referees for their careful reading and constructive suggestions. We are indebted to Prof.~Claude Bardos, Mr.~Daniel Boutros, and Prof. Edriss Titi for pointing out a flawed argument in an earlier version of the manuscript. SL also thanks Professors Linhan Li and Lihe Wang for insightful discussions on Green functions.

\bigskip
\noindent
{\bf Funding Declaration}.
This research was partially supported by NSFC under Grant No. 12331008, and the Shanghai Frontier Research Institute for Modern Analysis.  
The research of SL was also partially supported by NSFC under Grant Nos. 12201399 and 12411530065, Young Elite Scientists Sponsorship Program by CAST  2023QNRC001, National Key Research and Development Programs 2023YFA1010900 and 2024YFA1014900, and Shanghai Rising-Star Project. The research of YGW was also partially supported by NSFC under Grant Nos. 12171317, 12250710674 and 12161141004, and Shanghai Municipal Education Commission under Grant No. 2021-01-07-00-02-E00087.

\bigskip
\noindent
{\bf Author Contribution Declaration}. SL and YGW wrote the main manuscript text and reviewed the manuscript.

\bigskip
\noindent
{\bf Competing interests statement}. 
Both authors declare that there is no conflict of interest.

\bigskip
\noindent
{\bf Data availability statement}.
Our manuscript has no associated data.


\end{document}